\documentclass[11pt]{article}

\usepackage[a4paper,margin=1in]{geometry}
\usepackage{amsmath,amssymb,amsthm,mathtools}
\usepackage{enumitem}
\usepackage{xcolor}
\usepackage{hyperref}
\usepackage{listings}

\hypersetup{
  colorlinks=true,
  linkcolor=blue,
  citecolor=blue,
  urlcolor=blue
}

\lstdefinestyle{sagestyle}{
  language=Python,
  basicstyle=\ttfamily\scriptsize,
  breaklines=true,
  frame=single,
  numbers=left,
  numberstyle=\tiny,
  columns=fullflexible,
  showstringspaces=false
}

\newtheorem{theorem}{Theorem}[section]
\newtheorem{proposition}[theorem]{Proposition}
\newtheorem{lemma}[theorem]{Lemma}
\newtheorem{corollary}[theorem]{Corollary}
\theoremstyle{definition}

\theoremstyle{remark}

\newcommand{\F}{\mathbb F}
\newcommand{\Fq}{\mathbb F_q}

\title{Consecutive non-square non-primitive tuples in finite fields}
\author{Juncheng Zhou and Hongfeng Wu\footnote{Corresponding author.}
\setcounter{footnote}{-1}
\footnote{E-mail addresses:
jczhoumath@gmail.com (J. Zhou), whfmath@gmail.com (H. Wu)}
\\
{College of Science, North China University of Technology, Beijing, China}\\
}
\date{}

\begin{document}

\maketitle

\begin{abstract}
Let $q$ be an odd prime power and put
\[
  \theta_q=\frac{\varphi(q-1)}{q-1}.
\]
An element of $\Fq$ is called non-square non-primitive, or \emph{NSNP},
if it is both a non-square and a non-primitive element.
Let $\ell$ be an odd prime divisor of $q-1$.  We obtain a character-sum
estimate for the number of translates of an arbitrary finite set lying
in the set of non-square $\ell$th powers.  Combining this estimate with
a finite computation, we prove that $\theta_q<4/15$ guarantees three
consecutive NSNP elements.  On the boundary $\theta_q=4/15$, the only
exceptions are $q\in\{31,61,121\}$.  As a further application, we show
that for $\operatorname{char}\Fq>3$, the inequality $\theta_q<8/35$
guarantees four consecutive NSNP elements, with $q=211$ as the unique
exception on the boundary
$\theta_q=8/35$.  Thus both constants are best possible.
\end{abstract}

\medskip
\noindent\textbf{Keywords.}
Finite fields; primitive elements; non-square non-primitive elements;
multiplicative characters; translates of finite sets.

\medskip
\noindent\textbf{2020 Mathematics Subject Classification.}
11T30, 11T24.

\section{Introduction}

Let $\Fq$ be the finite field of odd order $q$.  Write
\[
  \theta_q=\frac{\varphi(q-1)}{q-1},
\]
so that $\theta_q$ is the proportion of primitive elements in
$\Fq^*$.  An element of $\Fq$ will be called \emph{NSNP} if it is
both a non-square and a non-primitive element.

The parameter $\theta_q$ has a direct density interpretation.  Since
$\mathbb F_q^\times$ is cyclic of even order $q-1$, every primitive
element is a non-square.  Indeed, if $g$ is a generator of
$\mathbb F_q^\times$, then every primitive element has the form $g^m$
with $\gcd(m,q-1)=1$; since $q-1$ is even, the exponent $m$ must be
odd, whereas the squares are precisely the even powers of $g$.
Consequently,
\[
 \#\{a\in\mathbb F_q^\times:a\text{ is NSNP}\}
 =
 \frac{q-1}{2}-\varphi(q-1)
 =
 \left(\frac12-\theta_q\right)(q-1).
\]
Thus $\frac12-\theta_q$ is exactly the density of NSNP elements in
$\mathbb F_q^\times$.  Moreover, if $\ell$ is an odd prime divisor of
$q-1$, then every $\ell$th power in $\mathbb F_q^\times$ is
non-primitive.  Hence the non-square $\ell$th powers form a structured
subset of the NSNP elements that can be studied by multiplicative
characters.  In our argument, $\ell$ is chosen to be the least odd
prime divisor of $q-1$.  The sharp constants obtained below are also
naturally reflected in the Euler products
\[
 \frac4{15}
 =
 \left(1-\frac12\right)
 \left(1-\frac13\right)
 \left(1-\frac15\right),
 \qquad
 \frac8{35}
 =
 \left(1-\frac12\right)
 \left(1-\frac13\right)
 \left(1-\frac15\right)
 \left(1-\frac17\right).
\]
They correspond respectively to the prime supports
$\{2,3,5\}$ and $\{2,3,5,7\}$ of $q-1$; the boundary arguments in
Sections~3 and~4 make this correspondence precise.

The existence of consecutive primitive elements, and of consecutive
elements satisfying related multiplicative restrictions, has been
studied extensively; see, for example,
\cite{Cohen1985,CohenSilvaTrudgian2015,Cohen2024}.  In~\cite{Cohen2026},
Cohen proved that if $\theta_q<1/3$, then $\Fq$ contains a pair of
consecutive NSNP elements, and determined the exceptional fields on the
boundary $\theta_q=1/3$.  He also asked whether there is an absolute
constant $c>0$ such that $\theta_q<c$ guarantees three consecutive
NSNP elements.

We answer Cohen's question with the following sharp result.

\begin{theorem}\label{thm:main-triple}
Let $q$ be an odd prime power.  If
\[
\theta_q<\frac{4}{15},
\]
then $\Fq$ contains three consecutive NSNP elements.

If
\[
\theta_q=\frac{4}{15},
\]
then the same conclusion holds except when
\[
q\in\{31,61,121\}.
\]
In particular, the constant $4/15$ is sharp.
\end{theorem}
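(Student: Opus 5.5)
The plan is to produce three consecutive elements $a,a+1,a+2$ that are all non-square $\ell$th powers, where $\ell$ is the least odd prime divisor of $q-1$. Such elements are automatically NSNP, as noted in the introduction.

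\textbf{Existence of $\ell$.} The hypothesis $\theta_q\le 4/15<1/2$ means $q-1$ is not a power of $2$, since otherwise $\theta_q=1/2$. Hence $\ell$ exists. Also $\theta_q\ge\prod_{p\mid q-1}(1-1/p)$ exactly, so the condition $\theta_q\le4/15$ constrains the prime support of $q-1$. If $\ell\ge 5$, i.e. $3\nmid q-1$, then
\[
\theta_q=\tfrac12\prod_{p\mid q-1,\,p\text{ odd}}(1-1/p)\le 4/15
\]
forces many odd prime factors, hence $q$ large. If $\ell=3$, the condition forces $q-1$ to have further prime factors besides $2$ and $3$ (since $\tfrac12\cdot\tfrac23=\tfrac13>\tfrac4{15}$).

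\textbf{Character-sum count.} Let $\eta$ be the quadratic character and $\chi$ a character of order $\ell$. The indicator of ``non-square $\ell$th power'' is
\[
\tfrac12\bigl(1-\eta(x)\bigr)\cdot\tfrac1\ell\sum_{j=0}^{\ell-1}\chi^j(x).
\]
Take the product over $x=a+t$, $t\in\{0,1,2\}$, and sum over $a$. The main term is $q/(2\ell)^3$. Every other term is a sum $\sum_a\psi_0(a)\psi_1(a+1)\psi_2(a+2)$ with the $\psi_t$ not all trivial; by Weil each is bounded by $2\sqrt q$, with an $O(1)$ correction from the points where some $a+t=0$. This should be the earlier translate estimate with $S=\{0,1,2\}$, giving
\[
N\ge \frac{q}{(2\ell)^3}-c\sqrt q \quad\text{with } c\approx 2,
\]
so $N>0$ once $\sqrt q>2(2\ell)^3$. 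This is fine for fixed small $\ell$ but weak if $\ell$ is large. I would mitigate this by sieving: count elements that are non-squares and non-primitive via \emph{any} odd prime $p\mid q-1$, where $x$ is non-primitive if it is a $p$th power for some $p$. A Cohen-style sieve inequality over the odd primes of $q-1$ replaces $1/\ell^3$ by something like $(1-2\delta)$-type expressions. Alternatively, I would note that for $\ell\ge5$ the condition $\theta_q\le4/15$ needs $\prod_{p\text{ odd}}(1-1/p)\le 8/15$, which forces at least three or four odd primes $\ge\ell$, making $q\ge\ell^3$-ish. That is probably insufficient alone, so some version of the sieve is likely needed.

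\textbf{Main obstacle.} The hardest part is the uniformity in $\ell$ together with the residual finite range. With $\ell=3$ the bound needs roughly $q>4\cdot 216^2\approx 1.9\cdot10^5$, and the constant depends on $\ell$. Below the threshold, and for the boundary $\theta_q=4/15$ (prime support exactly $\{2,3,5\}$), I would run a direct computer search over all odd prime powers $q$ with $\theta_q\le 4/15$ under the threshold, checking for three consecutive NSNP elements. This should confirm that only $q=31,61,121$ fail. Sharpness then follows from these explicit exceptions, all having $\theta_q=4/15$.
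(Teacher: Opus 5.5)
Your architecture matches the paper's: non-square $\ell$th powers with $\ell$ the least odd prime factor of $q-1$, Weil for large $q$, and a computer search below the threshold. As written, though, the argument has a genuine gap. Nothing bounds $\ell$, so the Weil threshold $q>4(2\ell)^6=256\ell^6$ is not a single number, and the ``direct computer search under the threshold'' is not a finite computation. You defer this to an unspecified Cohen-style sieve, on the grounds that $\theta_q\le4/15$ only forces ``three or four'' odd primes. That count is wrong, and no sieve is needed.

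Because $\ell$ is the \emph{least} odd prime factor, every odd prime factor of $q-1$ is at least $\ell$. So if $q-1$ has at most $m$ odd prime factors, then $\theta_q$ is at least $\frac12\prod(1-1/p)$ over the $m$ smallest primes $\ge\ell$.
\begin{itemize}
\item Since $\frac12\prod_{7\le p\le41}(1-1/p)>4/15$ (ten primes), $\ell\ge7$ forces at least eleven odd prime factors. Hence $q>2\ell^{11}>256\ell^6=(2L^3)^2$, which is beyond the Weil threshold.
\item Since $\frac12\prod_{5\le p\le17}(1-1/p)>4/15$ (five primes), $\ell=5$ forces at least six odd prime factors, so $q\ge1+2\cdot5\cdot7\cdot11\cdot13\cdot17\cdot19=3233231$.
\end{itemize}
This reduces everything to $\ell=3$. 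These inequalities are strict, so they also give $\ell=3$ on the boundary $\theta_q=4/15$. Your claim that the support is then exactly $\{2,3,5\}$ is true, by injectivity of $\mathcal P\mapsto\prod_{p\in\mathcal P}(1-1/p)$ (take the largest prime in the symmetric difference). However, you state it without proof, and you do not actually need it.

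The $\ell=5$ case also shows that your constant $c\approx2$ is too crude. The condition $\sqrt q>2L^3$ requires $q>4\cdot10^6$, and this is not implied by $q\ge3233231$, so $\ell=5$ would remain open, or would have to be added to the search. You need the exact error count. Index vectors supported on one position contribute only $O(1)$, and those supported on two positions contribute at most $\sqrt q+O(1)$. This gives
\[
A_3(L)=3(L-1)^2+2(L-1)^3=2L^3-3L^2+1
\]
and the threshold $q>(2L^3-3L^2+2)^2$. That threshold equals $1702^2=2896804<3233231$ for $\ell=5$, and $326^2=106276$ for $\ell=3$; for $\ell=3$ the crude constant would merely enlarge the search range. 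With these two repairs, what remains is exactly the paper's finite check over $\ell=3$, $q\le106276$. That check yields the exceptions $31,61,121$ and hence sharpness.
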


The sharpness follows since the three exceptional values above all satisfy
$\theta_q=4/15$, while none of the corresponding fields contains an NSNP
triple.

The main ingredient is an estimate for the number of translates of an
arbitrary finite set.  Let $\ell$ be an odd prime divisor of $q-1$,
put $L=2\ell$, and let
$\mathcal H\subseteq\Fq$ have $k$ elements.  If
$M_{\mathcal H,\ell}(q)$ counts the translates $x+\mathcal H$ lying
in the set of non-square $\ell$th powers, then
\[
 \left|M_{\mathcal H,\ell}(q)-\frac{q}{L^k}\right|
 \le \frac{A_k(L)}{L^k}\sqrt q+\frac{k}{L}.
\]
where $A_k(L)=((k-1)L-k)L^{k-1}+1$.
In particular, when $\mathcal H=\{0,1,\ldots,k-1\}$,
$M_{\mathcal H,\ell}(q)$ counts consecutive tuples.

As a further application of the same method, we obtain a sharp result
for 4-tuples.

\begin{theorem}\label{thm:main-quadruple}
Let $q$ be a prime power with $\operatorname{char}\Fq>3$.  If
\[
  \theta_q<\frac{8}{35},
\]
then $\Fq$ contains four consecutive NSNP elements.

If
\[
  \theta_q=\frac{8}{35},
\]
then the same conclusion holds except when
\[
q=211.
\]
In particular, the constant $8/35$ is sharp.
\end{theorem}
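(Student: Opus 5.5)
The plan is to apply the translate estimate from the introduction with $\mathcal H=\{0,1,2,3\}$, $k=4$, and $\ell$ the least odd prime divisor of $q-1$. Every element counted by $M_{\mathcal H,\ell}(q)$ is an $\ell$th power, hence non-primitive, and a non-square, hence NSNP. So it suffices to show $M_{\mathcal H,\ell}(q)>0$, which follows from
\[
 q>A_4(L)\sqrt q+4L^3 ,\qquad A_4(L)=(3L-4)L^3+1,\quad L=2\ell .
\]
The hypothesis $\operatorname{char}\Fq>3$ ensures that $0,1,2,3$ are distinct, so $\mathcal H$ really has four elements.

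First I would bound $\ell$ in terms of $\theta_q$. If $\theta_q\le 8/35$, then $q-1$ must have an odd prime divisor. Indeed, $q-1=2^a$ gives $\theta_q=1/2$. Let $\ell$ be the least odd prime divisor. If $\ell\ge 11$, every odd prime divisor of $q-1$ is at least $11$, and we need a lower bound for $\theta_q=\frac12\prod_{p\mid q-1,\,p\ \mathrm{odd}}(1-1/p)$. This product can be small only if $q-1$ has many prime factors, each at least $11$, so $q$ is huge. For each $\ell\in\{3,5,7,11,\dots\}$ I would show that $\theta_q\le 8/35$ forces $\omega(q-1)$, and hence $q$, to be large. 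I would compare this with the threshold $q>(A_4(L)+\epsilon)^2$, roughly $q>(3L)^2L^6\approx 9L^8$, that is $q\gtrsim 9\cdot 2^8\ell^8$.

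The key quantitative step is a lemma of the following form: if every odd prime factor of $q-1$ is at least $\ell$ and $\theta_q\le 8/35$, then $q-1\ge\prod$ of the first $r$ primes $\ge\ell$, where $r$ is minimal with $\frac12\prod(1-1/p)\le 8/35$ over those primes. I expect this product to grow faster than $9(2\ell)^8$ once $\ell$ exceeds a modest bound. Mertens-type estimates show that $r$ grows roughly like a power of $\ell$, which makes the product superexponential. For $\ell=3,5,7$ the condition $\theta_q\le 8/35$ still forces extra prime factors. For example, $\ell=3$ with support $\{2,3\}$ gives $\theta_q=1/3$, and $\{2,3,5\}$ gives $4/15>8/35$, so $q-1$ must have at least four distinct prime factors, with $\{2,3,5,7\}$ giving exactly $8/35$. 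Even so, $9\cdot 6^8\approx 1.5\cdot 10^7$ is not exceeded automatically. This leaves a finite, explicit list of candidate $q$ below the threshold in each case. A sharper version of the inequality would help here, using $|M-q/L^4|$ with the exact $A_4$ and solving the quadratic in $\sqrt q$.

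The main obstacle is this finite residue. Let $X=\bigl((A_4(L)+\sqrt{A_4(L)^2+16L^4})/2\bigr)^2$. Prime powers $q<X$ with $\theta_q\le 8/35$ and $\operatorname{char}>3$ must be checked directly. For $\ell=3$ this is about $10^7$, so I would enumerate prime powers $q$ up to that bound with $\theta_q\le 8/35$. Because the support must contain at least four primes, these are comparatively sparse. For each such $q$ I would search $\Fq$ directly for four consecutive NSNP elements, not just those that are $\ell$th powers; a short computer search over $x$ suffices. Every case with $\theta_q<8/35$ should succeed. Among the cases with $\theta_q=8/35$, that is $q-1=2^a3^b5^c7^d$, only $q=211$ should fail, and I would verify by exhaustive check that $\F_{211}$ has no NSNP quadruple. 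This last fact gives the sharpness.
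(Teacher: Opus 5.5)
Your plan is correct and matches the paper's proof: it applies Theorem~\ref{thm:k-tuple-estimate} with $k=4$, $\mathcal H=\{0,1,2,3\}$ and $\ell$ the least odd prime divisor of $q-1$, reduces to $\ell=3$ with $q$ below roughly $9.16\cdot 10^6$ (the paper uses $(A_4(6)+1)^2=9156676$), and finishes with a direct search in which only $q=211$ fails. The step you leave heuristic needs no Mertens estimate: for every $\ell\ge5$, $\theta_q\le 8/35$ forces at least ten odd prime factors, so $q-1\ge 2\prod_{j=0}^{9}(\ell+2j)>(3L^4)^2$, which eliminates all $\ell\ge5$ outright (your $\ell=5,7$ residues are empty); also, the discriminant in your $X$ should be $A_4(L)^2+16L^3$, though your larger value is harmlessly conservative.
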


Section~\ref{sec:k-tuples} proves the counting estimate.
Section~\ref{sec:triples} proves Theorem~\ref{thm:main-triple}, and
Section~\ref{sec:quadruples} proves Theorem~\ref{thm:main-quadruple}
by the same method.  The verification code is included in
Appendix~\ref{app:code}.

\section{Counting translates in non-square
\texorpdfstring{$\ell$}{ell}th powers}\label{sec:k-tuples}

Let $\ell$ be an odd prime divisor of $q-1$, put $L=2\ell$, and let
$\mathcal N_\ell$ denote the set of non-square $\ell$th powers in
$\Fq$.  Every element of $\mathcal N_\ell$ is non-primitive.

Let
\[
  \mathcal H=\{h_0,\ldots,h_{k-1}\}\subseteq\Fq
\]
be a set of $k\ge2$ distinct elements, and define
\[
  M_{\mathcal H,\ell}(q)
  =\#\{x\in\Fq:x+h_s\in\mathcal N_\ell
                  \text{ for }0\le s\le k-1\}.
\]
When $\operatorname{char}\Fq\ge k$ and
$\mathcal H=\{0,1,\ldots,k-1\}$, we write this number as
$M_{k,\ell}(q)$.

Let $\chi$ be a multiplicative character of exact order $L$.  For
every $0\le i\le L-1$, extend $\chi^i$ to zero by
$\chi^i(0)=0$; in particular, $\chi^0$ is the characteristic
function of $\Fq^\times$.  The indicator of $\mathcal N_\ell$ is
\[
\begin{aligned}
  \mathbf 1_{\mathcal N_\ell}(x)
  &=\frac{1}{2\ell}(1-\chi^\ell(x))
       \sum_{i=0}^{\ell-1}\chi^{2i}(x)\\
  &=\frac1L\sum_{i=0}^{L-1}(-1)^i\chi^i(x).
\end{aligned}
\]
Here $\chi^\ell$ is the quadratic character, while
$\ell^{-1}\sum_{i=0}^{\ell-1}\chi^{2i}$ is the indicator of the
nonzero $\ell$th powers.  Since $\ell$ is odd, multiplication by
$\chi^\ell$ shifts the even exponents modulo $2\ell$ onto the odd
exponents.  Our convention at zero makes the identity valid on all
of $\Fq$.

We use the following Weil bound; see~\cite{Schmidt}.

\begin{lemma}[Weil bound]\label{lem:weil}
Let $\chi$ be a nontrivial multiplicative character of order $d$
over $\mathbb F_q$, and let $f(x)\in\mathbb F_q[x]$.  Suppose that
$f$ has exactly $m>1$ distinct zeros in its splitting field and is
not of the form
\[
f(x)=c\,g(x)^d
\]
for some $c\in\overline{\mathbb F}_q^\times$ and
$g(x)\in\overline{\mathbb F}_q[x]$.  Then
\[
\left|
\sum_{x\in\mathbb F_q}\chi(f(x))
\right|
\le (m-1)\sqrt q.
\]
\end{lemma}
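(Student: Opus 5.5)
This lemma is the classical Weil bound and the paper takes it from~\cite{Schmidt}. If I were to prove it, I would use the $L$-function attached to the multiplicative character sum and then invoke the Riemann hypothesis for curves over finite fields. First I would reduce to a canonical form. Write $f=c\prod_{j=1}^m (x-\alpha_j)^{e_j}$ over the splitting field. Powers $(x-\alpha_j)^{e_j}$ with $d\mid e_j$ contribute $\chi((x-\alpha_j)^{e_j})=1$ away from the root, and changing the value at finitely many points only needs bookkeeping. The hypothesis that $f$ is not $c\,g^d$ guarantees that at least one exponent $e_j$ is not divisible by $d$, so the twisted sum is genuinely nontrivial.

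Next, for each $n\ge1$ I would set $S_n=\sum_{x\in\mathbb F_{q^n}}\chi(N_{\mathbb F_{q^n}/\Fq}(f(x)))$ and define
\[
L(T)=\exp\Bigl(\sum_{n\ge1}S_n\frac{T^n}{n}\Bigr).
\]
Expanding over monic polynomials via the multiplicativity of $h\mapsto\chi(\mathrm{Res}(h,f))$ shows that $L(T)=\sum_{h\ \mathrm{monic}}\lambda(h)T^{\deg h}$, where $\lambda$ is a character on $(\Fq[x]/(\mathrm{rad}\,f))^\times$-type data. Character orthogonality then kills all coefficients of degree $\ge m$, so $L$ is a polynomial of degree at most $m-1$. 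One checks that the degree is exactly $m-1$, or smaller if the character at infinity is trivial, which only helps.

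The crux is identifying the reciprocal roots $\omega_i$ of $L(T)$ and showing $|\omega_i|\le\sqrt q$. For this I would realize $L(T)$ as a factor of the zeta function of the smooth projective model of the Kummer curve $y^d=f(x)$. Counting points on this curve decomposes by the characters $\chi^j$ ($0\le j<d$) into the sum of $q^n+1$ and the $S_n$ for the powers of $\chi$. The Weil Riemann hypothesis for this curve then gives $|\omega_i|=\sqrt q$ or $|\omega_i|\le\sqrt q$ for the possibly degenerate ones. Alternatively, to stay elementary, I would prove the bound directly by Stepanov's auxiliary-polynomial method: one first obtains $|S_n|\le C q^{n/2}$ for large $n$, then uses $S_n=-\sum\omega_i^n$ together with the standard lemma that bounded power sums force $|\omega_i|\le\sqrt q$. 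This step is the main obstacle, since all the depth lies in the Riemann hypothesis.

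Finally, taking $n=1$ gives $S_1=-\sum_{i=1}^{m-1}\omega_i$, hence $|S_1|\le(m-1)\sqrt q$, which is the claimed inequality.
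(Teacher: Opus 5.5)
The paper gives no proof of this lemma; it quotes it from Schmidt's book, where it is proved by essentially your fallback route. There, Stepanov's method gives a weak bound $|S_n|\le Cq^{n/2}$ once $q^n$ is large, and the $L$-function is shown to be a polynomial of degree at most $m-1$. The power-sum argument then finishes: $\sum_n S_nT^n=-\sum_i\omega_iT/(1-\omega_iT)$ converges for $|T|<q^{-1/2}$, so no $|\omega_i|$ exceeds $\sqrt q$. Your main route, placing $L(T)$ inside the zeta function of the Kummer curve and invoking the Riemann hypothesis for curves, is Weil's original argument. Both outlines are correct. The geometric one is short once the Riemann hypothesis is granted but needs the algebraic-geometric setup, while Stepanov's is elementary and self-contained. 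In both, the constant $m-1$ comes from your orthogonality bound $\deg L\le m-1$, and the deep input only controls the size of the $\omega_i$.

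A few details need adjusting, though none is a real gap.

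First, $\chi((x-\alpha_j)^{e_j})$ is meaningless when $\alpha_j\notin\mathbb F_q$, so conjugate roots must be grouped into $\mathbb F_q$-irreducible factors $P_i^{e_i}$. In fact the reduction is unnecessary. $\lambda$ already lives modulo $\mathrm{rad}\,f$, and factors with $d\mid e_i$ merely make it imprimitive. They contribute Euler factors $1-\lambda^*(P)T^{\deg P}$ whose reciprocal roots have modulus $1$.

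Second, on the geometric route, $y^d=f(x)$ can be reducible even though $f\ne c\,g^d$ (e.g.\ $d=4$, $f=x^2(x-1)^2$). Set $d'=\gcd(d,e_1,\dots,e_r)$ and replace $\chi$ by $\chi^{d'}$ and $f$ by $\prod_iP_i^{e_i/d'}$. This changes the sum only by the unimodular factor $\chi(\mathrm{lc}\,f)$ and makes the curve geometrically irreducible. Alternatively, work with the Artin $L$-function of $\mathbb F_q(x)(\sqrt[d]{f})$.

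Third, in the setting of the lemma $\deg L$ is exactly $m-1$. When $d\mid\deg f$, the trivial character at infinity shows up as one reciprocal root of modulus $1$, not as a drop in degree. For example, with $\chi$ quadratic and $f=x(x-1)$ one gets $L(T)=1-T$.

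Since you only need at most $m-1$ reciprocal roots of modulus at most $\sqrt q$, your conclusion $|S_1|\le(m-1)\sqrt q$ is unaffected.
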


Substituting the characteristic function of $\mathcal N_\ell$ gives
\[
L^k M_{\mathcal{H},\ell}(q)
=
\sum_{\boldsymbol{i}\in\{0,\ldots,L-1\}^k}
(-1)^{i_0+\cdots+i_{k-1}}
\sum_{x\in\mathbb F_q}
\prod_{s=0}^{k-1}\chi^{i_s}(x+h_s),
\]
where
\[
\boldsymbol{i}=(i_0,\ldots,i_{k-1}).
\]

The term corresponding to $\boldsymbol{i}=\boldsymbol{0}$ is
\[
\sum_{x\in\mathbb F_q}
\prod_{s=0}^{k-1}\chi^0(x+h_s)
=q-k,
\]
since $\chi^0$ is the characteristic function of
$\mathbb F_q^\times$ and the $k$ elements
$-h_0,\ldots,-h_{k-1}$ are distinct.

Now fix $\boldsymbol{i}\ne\boldsymbol{0}$, and put
\[
I(\boldsymbol{i})
=
\{s\in\{0,\ldots,k-1\}:i_s\ne0\},
\qquad
r(\boldsymbol{i})=|I(\boldsymbol{i})|.
\]
Define
\[
f_{\boldsymbol{i}}(X)
=
\prod_{s\in I(\boldsymbol{i})}(X+h_s)^{i_s}.
\]
Since $\chi^0=\mathbf 1_{\mathbb F_q^\times}$, we have
\[
\prod_{s=0}^{k-1}\chi^{i_s}(x+h_s)
=
\chi\bigl(f_{\boldsymbol{i}}(x)\bigr)
\prod_{s\notin I(\boldsymbol{i})}\chi^0(x+h_s).
\]
Consequently,
\[
\sum_{x\in\mathbb F_q}
\prod_{s=0}^{k-1}\chi^{i_s}(x+h_s)
=
\sum_{\substack{x\in\mathbb F_q\\
x\ne-h_s\ {\rm for}\ s\notin I(\boldsymbol{i})}}
\chi\bigl(f_{\boldsymbol{i}}(x)\bigr).
\]

The polynomial $f_{\boldsymbol{i}}$ has precisely
$r(\boldsymbol{i})$ distinct zeros and is not a constant multiple of
an $L$th power.  If $r(\boldsymbol{i})\ge2$, Lemma~\ref{lem:weil}
therefore gives
\[
\left|
\sum_{x\in\mathbb F_q}
\chi\bigl(f_{\boldsymbol{i}}(x)\bigr)
\right|
\le
\bigl(r(\boldsymbol{i})-1\bigr)\sqrt q.
\]
The same estimate also holds when $r(\boldsymbol{i})=1$, since in
that case the complete character sum is zero.

Passing from the complete sum to the preceding restricted sum removes
at most $k-r(\boldsymbol{i})$ terms, each of absolute value at most
$1$.  Hence
\[
\left|
\sum_{x\in\mathbb F_q}
\prod_{s=0}^{k-1}\chi^{i_s}(x+h_s)
\right|
\le
\bigl(r(\boldsymbol{i})-1\bigr)\sqrt q
+k-r(\boldsymbol{i}).
\]

For each $1\le r\le k$, there are
\[
\binom{k}{r}(L-1)^r
\]
index vectors having support of size $r$.  It follows that
\[
\begin{aligned}
\left|L^kM_{\mathcal{H},\ell}(q)-(q-k)\right|
&\le
\sum_{r=1}^k
\binom{k}{r}(L-1)^r
\bigl((r-1)\sqrt q+k-r\bigr)\\
&=
A_k(L)\sqrt q+B_k(L),
\end{aligned}
\]
where
\[
A_k(L)
=
\sum_{r=1}^k\binom{k}{r}(L-1)^r(r-1)
=
\bigl((k-1)L-k\bigr)L^{k-1}+1,
\]
and
\[
B_k(L)
=
\sum_{r=1}^k\binom{k}{r}(L-1)^r(k-r)
=
k(L^{k-1}-1).
\]
Since $B_k(L)+k=kL^{k-1}$, we obtain
\[
\left|L^kM_{\mathcal H,\ell}(q)-q\right|
\leq
A_k(L)\sqrt q+kL^{k-1}.
\]
Dividing by $L^k$ gives the following theorem.

\begin{theorem}
\label{thm:k-tuple-estimate}
Let $\ell$ be an odd prime divisor of $q-1$, put $L=2\ell$, and let
$\mathcal H\subseteq\Fq$ consist of $k\ge2$ distinct elements.  With
$A_k(L)$ as defined above,
\begin{equation}\label{eq:refined-pattern-bound}
 \left|M_{\mathcal H,\ell}(q)-\frac{q}{L^k}\right|
 \le\frac{A_k(L)}{L^k}\sqrt q+\frac{k}{L}.
\end{equation}
\end{theorem}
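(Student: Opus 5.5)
The plan is to expand the indicator of $\mathcal N_\ell$ in characters and bound each nontrivial term, following the derivation just before the statement. Since the paper essentially derives the inequality there, the proof consists of organizing that computation and checking the two identities it relies on.

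First, use the identity $\mathbf 1_{\mathcal N_\ell}(x)=L^{-1}\sum_{i=0}^{L-1}(-1)^i\chi^i(x)$, valid on all of $\Fq$ with the convention $\chi^i(0)=0$. Take the product over $s=0,\ldots,k-1$ and sum over $x$. This writes $L^kM_{\mathcal H,\ell}(q)$ as a sum over index vectors $\boldsymbol i\in\{0,\ldots,L-1\}^k$ of signed character sums.

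The vector $\boldsymbol i=\boldsymbol 0$ contributes exactly $q-k$, because the points $-h_s$ are distinct. For $\boldsymbol i\neq\boldsymbol 0$ with support $I(\boldsymbol i)$ of size $r$, rewrite the summand as $\chi(f_{\boldsymbol i}(x))$ restricted to $x\ne -h_s$ for $s\notin I(\boldsymbol i)$. Then apply Lemma~\ref{lem:weil} to the complete sum.

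The one point needing care is that $f_{\boldsymbol i}$ is not of the form $c\,g^L$. This holds because every exponent $i_s$ with $s\in I(\boldsymbol i)$ satisfies $0<i_s<L$, and the roots $-h_s$ are distinct, so no exponent is divisible by $L$. When $r=1$, the complete sum is $\sum_x\chi^{i_s}(x+h_s)=0$, since $\chi^{i_s}$ is nontrivial. Removing the excluded points costs at most $k-r$. Each nonzero term is therefore bounded by $(r-1)\sqrt q+k-r$.

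Next, sum over supports, noting that there are $\binom kr(L-1)^r$ vectors with support of size $r$. This requires evaluating
\[
A_k(L)=\sum_{r=1}^k\binom kr(L-1)^r(r-1),\qquad B_k(L)=\sum_{r=1}^k\binom kr(L-1)^r(k-r).
\]
Use $\sum_r\binom kr x^r=(1+x)^k$ and $\sum_r r\binom kr x^r=kx(1+x)^{k-1}$ with $x=L-1$. These give $\sum_{r\ge1}\binom kr x^r=L^k-1$ and $\sum_r r\binom kr x^r=k(L-1)L^{k-1}$. Hence
\[
A_k=k(L-1)L^{k-1}-L^k+1=((k-1)L-k)L^{k-1}+1,
\]
and
\[
B_k=k(L^k-1)-k(L-1)L^{k-1}=k(L^{k-1}-1).
\]

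Finally, move the $-k$ from the main term into the error to get $|L^kM-q|\le A_k\sqrt q+B_k+k=A_k\sqrt q+kL^{k-1}$. Dividing by $L^k$ yields \eqref{eq:refined-pattern-bound}.

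The main obstacle is only the non-$L$th-power verification for the Weil bound; as argued above, it follows from the exponent range and the distinctness of the roots.
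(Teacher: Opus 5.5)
Your proposal is correct and follows the paper's own derivation step for step. The steps are: the character expansion of $\mathbf 1_{\mathcal N_\ell}$, the main term $q-k$, the Weil bound (or the vanishing complete sum when $r=1$) together with at most $k-r$ excluded points, the binomial evaluation of $A_k(L)$ and $B_k(L)$, and absorbing $k$ into $kL^{k-1}$. Your explicit reason that $f_{\boldsymbol i}$ is not a constant times an $L$th power (every multiplicity $i_s$ lies strictly between $0$ and $L$) justifies a point the paper only asserts.
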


\begin{corollary}\label{cor:pattern-existence}
Under the hypotheses of Theorem~\ref{thm:k-tuple-estimate}, let
$A_k(L)$ be as in that theorem.  If
\begin{equation}\label{eq:simple-existence-condition}
 q>\bigl(A_k(L)+1\bigr)^2,
\end{equation}
then some translate $x+\mathcal H$ is contained in
$\mathcal N_\ell$.
\end{corollary}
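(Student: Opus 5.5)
We deduce this directly from Theorem~\ref{thm:k-tuple-estimate}. It suffices to show that $M_{\mathcal H,\ell}(q)>0$. By \eqref{eq:refined-pattern-bound} we have
\[
 M_{\mathcal H,\ell}(q)\ge \frac{q-A_k(L)\sqrt q}{L^k}-\frac{k}{L}
 =\frac{\sqrt q\bigl(\sqrt q-A_k(L)\bigr)-kL^{k-1}}{L^k},
\]
so positivity follows once
\[
\sqrt q\bigl(\sqrt q-A_k(L)\bigr)>kL^{k-1}.
\]

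The hypothesis \eqref{eq:simple-existence-condition} gives $\sqrt q>A_k(L)+1$. Hence $\sqrt q-A_k(L)>1$, and therefore
\[
\sqrt q\bigl(\sqrt q-A_k(L)\bigr)>\sqrt q>A_k(L)+1.
\]
It thus remains to verify the purely combinatorial inequality
\[
A_k(L)+1\ge kL^{k-1}\qquad\text{for }k\ge2,\ L\ge6.
\]
Substituting $A_k(L)=((k-1)L-k)L^{k-1}+1$, this becomes
\[
\bigl((k-1)L-k\bigr)L^{k-1}+2\ge kL^{k-1}.
\]
Dividing by $L^{k-1}$, it is enough to have $(k-1)L-k\ge k$, that is, $(k-1)L\ge 2k$. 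Since $\ell$ is an odd prime, $L=2\ell\ge6$, so $(k-1)L\ge 6(k-1)$. Moreover $6(k-1)\ge 2k$ holds for every $k\ge2$, because it is equivalent to $4k\ge6$.

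Chaining these inequalities gives
\[
\sqrt q\bigl(\sqrt q-A_k(L)\bigr)>A_k(L)+1\ge kL^{k-1},
\]
so $M_{\mathcal H,\ell}(q)>0$. Hence some $x$ satisfies $x+h_s\in\mathcal N_\ell$ for all $s$, which means $x+\mathcal H\subseteq\mathcal N_\ell$.

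The only step needing any care is the comparison between the additive error term $k/L$ and the slack that the hypothesis provides. The $+1$ in $(A_k(L)+1)^2$ is exactly what absorbs that term, via the elementary inequality above. Beyond that, the argument is routine.
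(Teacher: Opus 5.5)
Your proof is correct and is essentially the paper's argument. Both start from the lower bound $L^kM_{\mathcal H,\ell}(q)\ge q-A_k(L)\sqrt q-kL^{k-1}$ and show it is positive once $\sqrt q>A_k(L)+1$, using the comparison $kL^{k-1}\lesssim A_k(L)$ that follows from $L\ge6$ and $k\ge2$; where the paper bounds the positive root of the quadratic via the discriminant and rationalization, you use the shorter direct chain $\sqrt q\,(\sqrt q-A_k(L))>\sqrt q>A_k(L)+1\ge kL^{k-1}$.
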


\begin{proof}
Put $u=\sqrt q$.  Theorem~\ref{thm:k-tuple-estimate} gives
\[
L^kM_{\mathcal H,\ell}(q)
\ge u^2-A_k(L)u-kL^{k-1}.
\]
Its discriminant is
\[
\Delta=A_k(L)^2+4kL^{k-1},
\]
and its positive root is
\[
\frac{A_k(L)+\sqrt\Delta}{2}.
\]
Rationalizing the numerator gives
\[
\frac{A_k(L)+\sqrt\Delta}{2}
=A_k(L)+
\frac{2kL^{k-1}}{A_k(L)+\sqrt\Delta}
<A_k(L)+\frac{kL^{k-1}}{A_k(L)}.
\]
Moreover, since $L=2\ell\ge6$ and $k\ge2$,
\[
\frac{kL^{k-1}}{A_k(L)}
=\frac{kL^{k-1}}
{\bigl((k-1)L-k\bigr)L^{k-1}+1}
<\frac{k}{(k-1)L-k}
=\frac{1}{(k-1)L/k-1}
<1.
\]
Thus the positive root is less than $A_k(L)+1$.  The hypothesis
$q>(A_k(L)+1)^2$ therefore makes the quadratic positive, and hence
$M_{\mathcal H,\ell}(q)>0$.
\end{proof}

\section{The optimal threshold for NSNP triples}\label{sec:triples}

Suppose that $\theta_q\le4/15$.  Then $q-1$ is not a power of $2$,
since otherwise $\theta_q=1/2$.  Let $\ell$ be the least odd prime
divisor of $q-1$ and put $L=2\ell$.

Taking $k=3$ and $\mathcal H=\{0,1,2\}$ in
Corollary~\ref{cor:pattern-existence} gives three consecutive
non-square $\ell$th powers whenever
\begin{equation}\label{eq:triple-large-q-condition}
 q>(2L^3-3L^2+2)^2,
\end{equation}
In particular, the right-hand side of
\eqref{eq:triple-large-q-condition} is $106276$ for $\ell=3$ and
$2896804$ for $\ell=5$.

\begin{lemma}\label{lem:triple-reduction}
Suppose that $\theta_q<4/15$ and that
\eqref{eq:triple-large-q-condition} fails.  Then
\[
 \ell=3\qquad\text{and}\qquad q\le106276.
\]
\end{lemma}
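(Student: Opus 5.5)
The plan is to bound $\theta_q$ from below in terms of $\ell$ and $q$ when \eqref{eq:triple-large-q-condition} fails, and show that every case other than $\ell=3$ forces $\theta_q\ge 4/15$. Write $q-1=2^{a}\prod_{p\mid q-1,\,p \text{ odd}} p^{e_p}$. Then
\[
\theta_q=\frac12\prod_{p\mid q-1,\ p\ \text{odd}}\Bigl(1-\frac1p\Bigr),
\]
and every odd prime factor satisfies $p\ge\ell$.

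\textbf{Case $\ell=3$.} Failure of \eqref{eq:triple-large-q-condition} means exactly $q\le(2\cdot 6^3-3\cdot 6^2+2)^2=106276$. This is the claimed conclusion.

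\textbf{Case $\ell\ge5$.} We must show that $\theta_q<4/15$ together with $q\le(2L^3-3L^2+2)^2$ is impossible. Let $\omega$ be the number of odd prime divisors of $q-1$. Each such prime is at least $\ell$, so $q-1\ge 2\ell^{\omega}$. The bound $q<(2L^3)^2=4L^6=256\,\ell^6$ then gives
\[
\ell^{\omega}<128\,\ell^6,
\]
which bounds $\omega$ by a small quantity depending on $\ell$. On the other hand, since the odd primes dividing $q-1$ are distinct and all at least $\ell$, we get
\[
\theta_q\ge \frac12\prod_{j=1}^{\omega}\Bigl(1-\frac1{p_j}\Bigr),
\]
where $p_1<p_2<\cdots$ are the $\omega$ consecutive primes beginning at $\ell$. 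The hypothesis $\theta_q<4/15$ means this product must be below $8/15$.

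For $\ell=5$ we have $q\le 2896804$, so $q-1<2.9\cdot10^6$. The product of odd primes from $5$ onward exceeds this bound once $\omega\ge 7$, since $5\cdot7\cdot11\cdot13\cdot17\cdot19\cdot23\approx 3.7\cdot10^7$. So $\omega\le 6$. With $\omega\le 6$ the smallest possible product is
\[
\tfrac45\cdot\tfrac67\cdot\tfrac{10}{11}\cdot\tfrac{12}{13}\cdot\tfrac{16}{17}\cdot\tfrac{18}{19}\approx 0.51,
\]
which is below $8/15\approx0.533$, so this crude product argument does not finish the case. Cutting off more carefully using $2\prod p\le q-1$ only excludes $\omega=7$, and $\omega=6$ remains possible: $2\cdot5\cdot7\cdot11\cdot13\cdot17\cdot19=3233230>2896803$ fails, but $5\cdot7\cdot11\cdot13\cdot17\cdot19=1616615$ passes only if $q-1$ is odd, which is impossible. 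So $\omega\le5$, and then the product is at least $\tfrac45\cdot\tfrac67\cdot\tfrac{10}{11}\cdot\tfrac{12}{13}\cdot\tfrac{16}{17}\approx0.543>8/15$. This gives a contradiction, and I expect the same argument to work for general $\ell\ge5$.

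For general $\ell\ge5$, the key inequality is that $\prod_{j\le\omega}(1-1/p_j)$ must fall below $8/15$. This needs roughly $\sum 1/p_j>0.63$ over primes in $[\ell,P]$. By Mertens this requires $\log\log P-\log\log\ell\gtrsim0.6$, that is, $P\gtrsim\ell^{1.8}$. The product of all these primes is then about $e^{P}$, which vastly exceeds $256\,\ell^6$. I would make this rigorous using explicit Mertens/Chebyshev bounds (Rosser–Schoenfeld) for $\ell$ beyond some modest threshold, and check small $\ell$ (say $\ell<100$) by direct computation.

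The main obstacle is the borderline case $\ell=5$, where the margin is thin and the parity of $q-1$ is needed. An alternative there is a finite machine check.
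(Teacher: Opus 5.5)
Your cases $\ell=3$ and $\ell=5$ are correct. The $\ell=5$ argument is the paper's, with the two inequalities used in the opposite order: $q-1\le 2896803<2\cdot5\cdot7\cdot11\cdot13\cdot17\cdot19$ forces at most five odd prime divisors, and then $\frac12\cdot\frac45\cdot\frac67\cdot\frac{10}{11}\cdot\frac{12}{13}\cdot\frac{16}{17}>\frac4{15}$. (Your sentence saying that $\omega=6$ ``remains possible'' contradicts the computation right after it and should be removed.)

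The genuine gap is the case $\ell\ge7$. There you only offer a Mertens heuristic and a plan: Rosser--Schoenfeld bounds for large $\ell$ plus a computer check for $\ell<100$. None of this is carried out, so as written the lemma is unproved for every $\ell\ge7$. You also misplace the difficulty: the tight case is $\ell=5$, which you have already done, and $\ell\ge7$ is easy.

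No analytic estimates are needed; your own setup already finishes the job. You derived $\ell^{\omega-6}<128$. Since $\ell^3\ge 7^3=343>128$, this gives $\omega\le8$ for \emph{every} $\ell\ge7$. The odd prime divisors of $q-1$ are then at most eight distinct primes, all $\ge7$, so
\[
\theta_q\ge\frac12\prod_{7\le p\le31}\Bigl(1-\frac1p\Bigr)\approx0.2866>\frac4{15},
\]
a contradiction. The observation you missed is that your lower bound $\frac12\prod_{j\le\omega}(1-1/p_j)$ increases with $\ell$ and decreases with $\omega$. Hence a single finite product evaluated at $\ell=7$ controls all $\ell\ge7$ uniformly.

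The paper uses exactly this, run in the other direction. At most ten odd prime divisors $\ge7$ would give $\theta_q\ge\frac12\prod_{7\le p\le41}(1-1/p)>\frac4{15}$. So $q-1$ has at least eleven, whence $q>2\ell^{11}>256\ell^6>(2L^3-3L^2+2)^2$.
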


\begin{proof}
Suppose first that $\ell\ge7$.  If $q-1$ had at most ten distinct
odd prime divisors, then
\[
 \theta_q\ge
 \frac12\prod_{7\le p\le41}\left(1-\frac1p\right)
 >\frac4{15},
\]
a contradiction.  Thus $q-1$ has at least eleven distinct odd prime
divisors, and hence
\[
 q>2\ell^{11}>(2L^3-3L^2+2)^2.
\]
This contradicts the failure of
\eqref{eq:triple-large-q-condition}.  Therefore
$\ell\in\{3,5\}$.

Now suppose that $\ell=5$.  The inequality
\[
 \frac12\prod_{5\le p\le17}\left(1-\frac1p\right)
 >\frac4{15}
\]
shows that $q-1$ has at least six distinct odd prime divisors.  Hence
\[
 q\ge1+2\cdot5\cdot7\cdot11\cdot13\cdot17\cdot19
 =3233231.
\]
Since $3233231>2896804$, this is again a contradiction.  Thus
$\ell=3$, and the failure of~\eqref{eq:triple-large-q-condition}
gives $q\le106276$.
\end{proof}

\begin{proposition}\label{prop:finite-check}
Let $\ell$ be the least odd prime divisor of $q-1$.
Among the odd prime powers satisfying
\[
 \theta_q<\frac4{15},\qquad \ell=3,
 \qquad q\le106276,
\]
there are exactly $1374$ candidates, and all contain an NSNP triple.
\end{proposition}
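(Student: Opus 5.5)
This is a finite verification, and I would carry it out by direct computation, as in the code of Appendix~\ref{app:code}.

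The first step is to enumerate the candidates. For every odd prime power $q\le106276$, written $q=p^n$, I would factor $q-1$ and compute $\theta_q=\prod_{r\mid q-1}(1-1/r)$ exactly in rational arithmetic. A candidate is kept when $3\mid q-1$ and $\theta_q<4/15$. Since $3$ is the smallest odd prime, the condition $3\mid q-1$ already forces $\ell=3$. Counting the survivors should return $1374$.

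The second step handles each candidate. I would build $\Fq$: for prime $q$ this is $\mathbb Z/q\mathbb Z$; for $n>1$ I would use a finite-field implementation such as Sage's \texttt{GF}. Next I would find a generator $g$ and form the discrete-logarithm table $a=g^{e}$ for $a\in\Fq^\times$. An element $a$ is NSNP exactly when $e$ is odd and $\gcd(e,q-1)>1$. Since $\ell=3$ divides $q-1$, I can use the sufficient test "$e$ odd and $3\mid e$"; that is, $a$ is a non-square cube, so $a\in\mathcal N_3$.

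I would then search for $x$ with $x,x+1,x+2$ all NSNP. In characteristic $3$ the elements $x,x+1,x+2$ are still distinct, so nothing changes. The search stops at the first hit. By the heuristic count $q/216$ for $\mathcal N_3$-triples, a hit will usually appear quickly. If no triple in $\mathcal N_3$ is found, I would fall back to the full NSNP criterion $\gcd(e,q-1)>1$.

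The main obstacle is making the computation reliable and cheap rather than any conceptual step. I see three issues.
\begin{itemize}
\item Non-prime $q$ needs correct field arithmetic. These are few: odd prime powers up to $10^5$ with $3\mid q-1$, such as $25$, $49$, $121$, $343$, $625$, $729$ (excluded, since $3\nmid 728$), and so on.
\item For small $q$, $\mathcal N_3$ may contain no triple even though NSNP triples exist. This is why the fallback to the full NSNP criterion is needed. For the smallest candidates, where $\theta_q<4/15$ forces several prime factors (e.g.\ $q-1$ divisible by $2\cdot3\cdot5\cdot7$), a direct check is easy.
\item Boundary cases must be excluded correctly. The exceptional fields $31,61,121$ have $\theta_q=4/15$ exactly, so the strict rational comparison must not admit them.
\end{itemize}
Finally, I would report the count $1374$ and confirm that every candidate yielded a triple.
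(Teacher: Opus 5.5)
Your proposal is correct and is the paper's own proof: the paper simply runs a finite SageMath computation that enumerates the odd prime powers $q\le 106276$ with $\ell=3$ and $\theta_q<4/15$ (compared exactly in rational arithmetic) and searches each $\Fq$ directly for $x$ with $x,x+1,x+2$ all NSNP. Your differences are implementation details that do not affect validity: you use discrete-log tables instead of the paper's power tests $a^{(q-1)/2}=-1$ and $a^{(q-1)/r}=1$ for some odd prime $r\mid q-1$, and you search first in $\mathcal N_3$ before falling back to the full NSNP criterion.
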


\begin{proof}
This follows from the SageMath computation in
Appendix~\ref{app:code}.
\end{proof}

\begin{lemma}\label{lem:support-injectivity}
The map
\[
 \mathcal P\longmapsto
 \prod_{p\in\mathcal P}\left(1-\frac1p\right)
\]
is injective on finite sets of primes.
\end{lemma}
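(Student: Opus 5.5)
We will show that the product $\prod_{p\in\mathcal P}(1-1/p)=\prod_{p\in\mathcal P}\frac{p-1}{p}$, written as a reduced fraction, has denominator exactly $\prod_{p\in\mathcal P}p$. Since a rational number has a unique reduced form and a squarefree integer determines its set of prime factors, the value of the product then determines $\mathcal P$, which gives injectivity.

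Let $\mathcal P$ be finite and put
\[
N=\prod_{p\in\mathcal P}(p-1),\qquad D=\prod_{p\in\mathcal P}p,
\]
so the product equals $N/D$. The key step is to show $\gcd(N,D)=1$, and for this it is enough to check that no $p\in\mathcal P$ divides $N$.

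Suppose some $p\in\mathcal P$ divided $N$. Then $p$ would divide $p'-1$ for some $p'\in\mathcal P$, and in particular $p<p'$. Take $p$ to be the \emph{largest} element of $\mathcal P$. Every factor $p'-1$ with $p'\in\mathcal P$ satisfies $1\le p'-1<p$, so $p$ divides none of these factors. Since $p$ is prime, it does not divide their product $N$.

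This handles only the largest prime, so we proceed by induction, peeling off one prime at a time. Assume $\mathcal P$ and $\mathcal Q$ give the same product. Write $\alpha(\mathcal P)=\prod_{p\in\mathcal P}(1-1/p)$ and consider the $p$-adic valuation of this rational number.
\begin{itemize}
\item For the largest prime $p_{\max}$ of $\mathcal P$, the numerator $N$ is coprime to $p_{\max}$ by the argument above.
\item Hence $v_{p_{\max}}\bigl(\alpha(\mathcal P)\bigr)=-1$.
\item For any prime larger than every element of $\mathcal P$, the valuation of $\alpha(\mathcal P)$ is $0$.
\end{itemize}
So the largest prime at which $\alpha$ has negative valuation is exactly $\max\mathcal P$. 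Equal products therefore force $\max\mathcal P=\max\mathcal Q$. Cancelling the common nonzero factor $1-1/p_{\max}$ leaves two smaller sets with equal products, and induction on $|\mathcal P|+|\mathcal Q|$ finishes the proof. The base case is the empty set, whose product is $1$; any nonempty set gives a product strictly less than $1$, so the empty set cannot collide with a nonempty one.

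The main care is in the valuation step. A smaller prime $p$ of $\mathcal P$ can indeed divide $N$ (for example $2$ divides $3-1$), which is why the argument uses only the maximum prime at each stage rather than claiming the whole fraction $N/D$ is reduced. Taking the maximum at each step avoids this issue entirely.
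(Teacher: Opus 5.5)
Your valuation-and-induction argument is correct and rests on the same fact as the paper's proof: the largest prime $r$ in play divides the denominator exactly once and divides none of the numerator factors, which are all positive integers below $r$. The paper avoids induction by cancelling the common primes, passing to the disjoint sets $S\setminus T$ and $T\setminus S$, and getting a divisibility contradiction at the largest prime of their union; your peeling version is equivalent.

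Delete your opening plan, though. The claim that $N/D$ is already reduced, i.e.\ $\gcd(N,D)=1$, is false: for $\mathcal P=\{2,3\}$ the product is $1/3$. You observe this yourself at the end, and the actual proof never uses it.
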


\begin{proof}
Suppose that two distinct finite sets of primes $S$ and $T$ give the
same product.
After cancelling the common factors, put
\[
A=S\setminus T,\qquad B=T\setminus S.
\]
Then $A$ and $B$ are disjoint and
\[
\prod_{p\in A}\frac{p-1}{p}
 =
\prod_{p\in B}\frac{p-1}{p}.
\]
Let $r$ be the largest prime in $A\cup B$, and suppose without
loss of generality that $r\in A$.  Cross-multiplication gives
\[
\prod_{p\in A}(p-1)\prod_{p\in B}p
 =
\prod_{p\in B}(p-1)\prod_{p\in A}p.
\]
The right-hand side is divisible by $r$, whereas the left-hand
side is not, since all of its factors are strictly smaller than
$r$.  This is a contradiction.
\end{proof}

\begin{corollary}\label{cor:boundary-primes-triple}
If $\theta_q=4/15$, then the set of prime divisors of $q-1$ is
precisely $\{2,3,5\}$.  In particular, the least odd prime divisor
of $q-1$ is $3$.
\end{corollary}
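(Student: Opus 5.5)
The plan is to use the factorization $\theta_q=\prod_{p\mid q-1}(1-1/p)$, which holds because $\varphi(n)/n=\prod_{p\mid n}(1-1/p)$. The hypothesis $\theta_q=4/15$ then says
\[
\prod_{p\mid q-1}\left(1-\frac1p\right)=\frac4{15}
=\left(1-\frac12\right)\left(1-\frac13\right)\left(1-\frac15\right),
\]
so both sides are values of the map in Lemma~\ref{lem:support-injectivity}. The left side is evaluated at the finite set of primes dividing $q-1$, and the right side at $\{2,3,5\}$. Injectivity forces these two prime sets to coincide, which gives the first assertion.

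For the second assertion, $3$ is the smallest odd prime in $\{2,3,5\}$, and $3$ divides $q-1$. Hence the least odd prime divisor $\ell$ of $q-1$ is $3$.

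There is no real obstacle. The only point to state explicitly is the product formula for $\varphi(n)/n$, so that $\theta_q$ is literally the image under the map of Lemma~\ref{lem:support-injectivity} of the prime support of $q-1$. Evenness of $q-1$ is not needed separately, since the injectivity already places $2$ in the support.
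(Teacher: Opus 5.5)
Your proposal is correct and follows essentially the same route as the paper: it writes $\theta_q=\prod_{p\mid q-1}(1-1/p)$, matches this with $(1-\tfrac12)(1-\tfrac13)(1-\tfrac15)$, and applies the injectivity of Lemma~\ref{lem:support-injectivity} to get $\{p:p\mid q-1\}=\{2,3,5\}$, so $\ell=3$. You also state the product formula for $\varphi(n)/n$ explicitly, which the paper uses without comment.
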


\begin{proof}
We have
\[
\theta_q
 =\prod_{p\mid q-1}\left(1-\frac1p\right)
 =\frac4{15}
 =\left(1-\frac12\right)
  \left(1-\frac13\right)
  \left(1-\frac15\right).
\]
The injectivity just proved therefore implies
\[
\{p:p\mid q-1\}=\{2,3,5\}.
\]
\end{proof}

\begin{proposition}\label{prop:boundary}
Among the odd prime powers satisfying
\[
\theta_q=\frac{4}{15}
\qquad\text{and}\qquad
q\le106276,
\]
there are exactly $56$ candidates.  Every corresponding finite
field contains an NSNP triple except for
\[
q\in\{31,61,121\}.
\]
None of these three exceptional fields contains an NSNP triple.
\end{proposition}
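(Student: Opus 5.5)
This proposition is a finite verification, and I would prove it by computation, organised so that each part of the statement is independently checkable. By Corollary~\ref{cor:boundary-primes-triple}, $\theta_q=4/15$ holds exactly when the prime divisors of $q-1$ are precisely $\{2,3,5\}$. So the first step is to enumerate all odd prime powers $q\le106276$ with $q-1=2^a3^b5^c$ and $a,b,c\ge1$. There are two equivalent ways to do this. One is to loop over the (few) triples $(a,b,c)$ with $2^a3^b5^c+1\le106276$ and keep those for which $2^a3^b5^c+1$ is a prime power. The other is to loop over prime powers $q\le106276$ and test $\varphi(q-1)/(q-1)=4/15$ exactly in rational arithmetic. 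I would run both and check that they agree on the count $56$.

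For each of the $56$ fields I would then build $\F_q$ explicitly. For a prime this is $\mathbb Z/q$; for a proper prime power I would use SageMath's \texttt{GF(q)} with a fixed modulus. I would precompute a boolean array marking the NSNP elements. An element $a\ne0$ is NSNP iff $a^{(q-1)/2}=-1$ and $a^{(q-1)/p}=1$ for some prime $p\in\{2,3,5\}$ dividing $q-1$. Since $a$ is already a non-square, the test reduces to checking $a^{(q-1)/3}=1$ or $a^{(q-1)/5}=1$. Then I would scan all $x\in\F_q$ and test whether $x$, $x+1$, $x+2$ are all NSNP. Here $x+1$ and $x+2$ are computed in the field, so in characteristic $p$ these are genuine field translates. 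This matters for $q=121$, where the elements $0,1,2$ lie in the prime subfield $\F_{11}$. The scan records the first witness $x$ when one exists.

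A computation saying ``no triple'' is harder to trust than one saying ``triple found''. For a success, the found $x$ is itself a certificate, and printing it lets a reader verify it by hand. For the three exceptions $q\in\{31,61,121\}$, the absence of a triple is a negative statement over the whole field, so I would add an independent check.

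For $q=31$ and $q=61$ this check can be done by hand. List the NSNP elements explicitly as the non-squares among the cubes and fifth powers, i.e.\ the elements $g^m$ with $m$ odd and $\gcd(m,q-1)>1$. Then observe directly that no three of them are consecutive residues. For $q=121$, I would recheck with a second, independent model of $\F_{121}$ (a different irreducible quadratic over $\F_{11}$) to rule out any implementation artifact. The NSNP property and the relation $y=x+1$ are intrinsic to the field, so both models must give the same answer.

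I do not expect a conceptual obstacle. The main risk is correctness of the implementation: producing the candidate list exactly, and handling the non-prime field $q=121$ correctly, in particular that translation is by the field element $1$ and that squareness and primitivity are computed in $\F_{121}$ rather than modulo $121$. The cross-checks above are aimed at exactly those points.
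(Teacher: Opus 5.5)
Your proposal is correct and is essentially the paper's own proof: the paper also proves this proposition purely by the SageMath computation in the Appendix. That program enumerates the odd prime powers $q\le 106276$ with $\theta_q=4/15$, checks that their prime support is $\{2,3,5\}$, and brute-forces over $x\in\F_q$ for $x,x+1,x+2$ all NSNP, using the same tests $a^{(q-1)/2}=-1$ and $a^{(q-1)/r}=1$ for an odd prime $r\mid q-1$. Your extra safeguards (a second enumeration via exponent triples, hand verification of $q=31,61$, and an independent model of $\F_{121}$) go beyond what the paper does but do not change the method.
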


\begin{proof}
This follows from the SageMath computation in
Appendix~\ref{app:code}.
\end{proof}

\begin{proof}[Proof of Theorem~\ref{thm:main-triple}]
Suppose first that
\[
\theta_q<\frac4{15}.
\]
If~\eqref{eq:triple-large-q-condition} holds,
Corollary~\ref{cor:pattern-existence} gives three consecutive
non-square $\ell$th powers.  Otherwise,
Lemma~\ref{lem:triple-reduction} gives $\ell=3$ and $q\le106276$,
and Proposition~\ref{prop:finite-check} completes the proof.

Now suppose that
\[
\theta_q=\frac4{15}.
\]
By Corollary~\ref{cor:boundary-primes-triple}, we have $\ell=3$.
If~\eqref{eq:triple-large-q-condition} holds,
Corollary~\ref{cor:pattern-existence} gives an NSNP triple.  Otherwise
$q\le106276$, and
Proposition~\ref{prop:boundary} shows that an NSNP triple exists unless
\[
q\in\{31,61,121\}.
\]

Finally, for each $q\in\{31,61,121\}$, we have $\theta_q=4/15$, but
$\Fq$ contains no NSNP triple.  Consequently no constant larger than
$4/15$ can replace $4/15$ in the strict inequality.
\end{proof}

\section{The optimal threshold for NSNP 4-tuples}
\label{sec:quadruples}

Suppose that $\theta_q\le8/35$.  Then $q-1$ is not a power of $2$,
since otherwise $\theta_q=1/2$.  Let $\ell$ be the least odd prime
divisor of $q-1$ and put $L=2\ell$.

Taking $k=4$ and $\mathcal H=\{0,1,2,3\}$ in
Corollary~\ref{cor:pattern-existence} gives four consecutive
non-square $\ell$th powers whenever
\begin{equation}\label{eq:quadruple-large-q-condition}
 q>(3L^4-4L^3+2)^2,
\end{equation}
When $\ell=3$, the right-hand side is $9156676$.

\begin{lemma}\label{lem:quadruple-reduction}
Suppose that $\theta_q<8/35$ and that
\eqref{eq:quadruple-large-q-condition} fails.  Then
\[
 \ell=3\qquad\text{and}\qquad q\le9156676.
\]
\end{lemma}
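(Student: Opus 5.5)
The plan is to follow the proof of Lemma~\ref{lem:triple-reduction}, with the constant $4/15$ replaced by $8/35$ and the threshold \eqref{eq:triple-large-q-condition} replaced by \eqref{eq:quadruple-large-q-condition}. Write $R(\ell)=(3L^4-4L^3+2)^2$ with $L=2\ell$. Since $\theta_q=\prod_{p\mid q-1}(1-1/p)$ and $2\mid q-1$, the hypothesis $\theta_q<8/35$ says that the product of $(1-1/p)$ over the odd primes dividing $q-1$ is less than $16/35\approx0.45714$. Every such prime is at least $\ell$. For a fixed number of distinct odd primes, all at least $\ell$, the product is smallest when the primes are the consecutive primes starting at $\ell$. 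So a lower bound on that product forces a minimum number of odd prime divisors, and hence a lower bound on $q$. I then compare that bound with $R(\ell)$.

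\emph{Case $\ell\ge7$.} I would verify numerically that
\[
\frac12\prod_{7\le p\le 83}\Bigl(1-\frac1p\Bigr)>\frac{8}{35}.
\]
The product over these $20$ primes is about $0.461>16/35$. By the minimality remark, $q-1$ must then have at least $21$ distinct odd prime divisors, each at least $\ell$, so $q>2\ell^{21}$. On the other hand, $R(\ell)<(3L^4)^2=2304\,\ell^8$. The inequality $2\ell^{21}>2304\,\ell^8$ is equivalent to $\ell^{13}>1152$, which holds for $\ell\ge7$. So \eqref{eq:quadruple-large-q-condition} would hold, a contradiction.

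\emph{Case $\ell=5$.} Here $L=10$ and $R(5)=26002^2=676104004$. I would check
\[
\frac12\prod_{5\le p\le 31}\Bigl(1-\frac1p\Bigr)>\frac8{35}.
\]
This product over $9$ primes is about $0.4586$, which exceeds $0.45714$ only barely. So $q-1$ has at least $10$ distinct odd prime divisors, and
\[
q\ge 1+2\cdot5\cdot7\cdot11\cdot13\cdot17\cdot19\cdot23\cdot29\cdot31\cdot37.
\]
The partial product up to $29$ is already about $1.08\times10^9$, which exceeds $R(5)$. Again this contradicts the failure of \eqref{eq:quadruple-large-q-condition}.

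Hence $\ell=3$. Then $R(3)=9156676$, and the failure of \eqref{eq:quadruple-large-q-condition} gives $q\le9156676$. The only delicate point is the $\ell=5$ product inequality, because the margin is small. I would verify it exactly with rational arithmetic: cross-multiply $\prod_{5\le p\le31}(p-1)$ against $\prod_{5\le p\le 31}p$ and compare with $16/35$, rather than relying on decimal approximations. If the margin had failed, the fallback would be to count only $9$ primes; even then the product $5\cdot7\cdots31$ alone is far above $R(5)$.
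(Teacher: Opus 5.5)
Your proof is correct, but it is organized differently from the paper's. The numerical claims hold.
\begin{itemize}
\item $\prod_{7\le p\le 83}(1-1/p)\approx0.4611$ exceeds $16/35\approx0.45714$.
\item $\prod_{5\le p\le31}(1-1/p)\approx0.45856$ also exceeds $16/35$. This holds exactly, since $35\cdot15328051200=536481792000>534827973680=16\cdot33426748355$.
\item $R(5)=676104004$.
\item Your figure $1.08\times10^9$ is $5\cdot7\cdots29$ without the factor $2$. With the $2$ it is about $2.16\times10^9$, so the conclusion is unaffected.
\end{itemize}

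The paper does not split off $\ell=5$. It uses only the nine-prime inequality over $5\le p\le31$, valid for every $\ell\ge5$, to force ten odd prime divisors. It then replaces your crude bound ``each prime is at least $\ell$'' by the spacing bound $p_j\ge\ell+2(j-1)$. This gives $q-1\ge2\prod_{j=0}^{9}(\ell+2j)>\ell^6(\ell+12)(\ell+14)(\ell+16)(\ell+18)>\ell^6(48\ell)^2=(3L^4)^2$, using two elementary quadratic inequalities. That refinement is what makes ten primes enough uniformly in $\ell$. With the crude bound, ten primes give only $q>2\ell^{10}$, which beats $2304\ell^8$ only for $\ell\ge37$.

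Your route avoids the spacing argument by forcing many more prime divisors (twenty-one) when $\ell\ge7$. The cost is an extra numerical product check and a separate explicit computation for $\ell=5$. In exchange, it exactly parallels the structure of Lemma~\ref{lem:triple-reduction}. The paper's version is uniform in $\ell$ and needs only one numerical check.

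Incidentally, neither of your inequalities is actually delicate. For $\ell=5$, eight odd prime divisors already suffice, since $5\cdot7\cdots29>R(5)$. For $\ell\ge7$, twelve suffice, since $7^4>1152$. The products needed to force these counts ($\approx0.491$ over $5\le p\le23$ and $\approx0.531$ over $7\le p\le43$) clear $16/35$ comfortably.
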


\begin{proof}
As in Lemma~\ref{lem:triple-reduction}, suppose that $\ell\ge5$.
If $q-1$ had at most nine distinct odd
prime divisors, then
\[
 \theta_q\ge
 \frac12\prod_{5\le p\le31}
 \left(1-\frac1p\right)
 >\frac8{35},
\]
a contradiction.  Thus $q-1$ has at least ten distinct odd prime
divisors.  If $p_1<\cdots<p_{10}$ are the ten smallest of them, then
$p_j\ge\ell+2(j-1)$.  Since
\[
(\ell+12)(\ell+18)>48\ell,
\qquad
(\ell+14)(\ell+16)>48\ell,
\]
we obtain
\[
\begin{aligned}
 q-1
 &\ge2\prod_{j=0}^9(\ell+2j)\\
 &>\ell^6(\ell+12)(\ell+14)(\ell+16)(\ell+18)\\
 &>\ell^6(48\ell)^2=(3L^4)^2.
\end{aligned}
\]
so $\sqrt q>3L^4>3L^4-4L^3+2$.  This contradicts the failure of
\eqref{eq:quadruple-large-q-condition}.  Therefore $\ell=3$, and the
failure of~\eqref{eq:quadruple-large-q-condition} gives
$q\le9156676$.
\end{proof}

\begin{proposition}\label{prop:quadruple-strict-check}
Among the odd prime powers satisfying
\[
 \theta_q<\frac8{35},\qquad
 \ell=3,\qquad
 q\le9156676,
\]
there are exactly $13943$ candidates.  Every corresponding field
contains four consecutive NSNP elements.
\end{proposition}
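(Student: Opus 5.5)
This is a finite computation, organized like Proposition~\ref{prop:finite-check}. First I enumerate every odd prime power $q\le 9156676$. For each one I factor $q-1$ and keep $q$ exactly when three conditions hold: $3\mid q-1$ (so $\ell=3$), the characteristic exceeds $3$, and $\varphi(q-1)/(q-1)<8/35$. The last test uses exact rational arithmetic, namely
\[
35\,\varphi(q-1)<8(q-1).
\]
The divisibility $3\mid q-1$ already excludes characteristic $3$. The hypothesis $\operatorname{char}\Fq>3$ is needed so that $0,1,2,3$ are distinct in $\Fq$. Counting the survivors should give the stated $13943$ candidates. Prime powers $p^e$ with $e\ge2$ are few in this range and can be listed directly.

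For each candidate I search for four consecutive NSNP elements by working in the prime field when $q=p$. For non-prime $q$, consecutive elements $x,x+1,x+2,x+3$ may be taken with $x\in\F_p$ as a first attempt; if that fails, I scan general $x\in\Fq$.

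The NSNP test uses the observation from Section~2 that a non-square $\ell$th power is NSNP. So it is enough to test $a^{(q-1)/2}=-1$ together with non-primitivity. Non-primitivity means $a^{(q-1)/r}=1$ for some prime $r\mid q-1$. Since $a$ is a non-square, only odd $r$ need to be tested. In practice the cheapest route is to fix a generator $g$ and tabulate discrete logarithms when $q$ is prime. Then $a=g^m$ is NSNP exactly when $m$ is odd and $\gcd(m,q-1)>1$. The search stops at the first $x$ for which $x,\dots,x+3$ are all NSNP.

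I expect the only real obstacle to be cost. The range is of size about $9\cdot10^6$, and a full discrete-log table for each candidate would be expensive. Two facts make this manageable. First, the candidates are sparse: $13943$ of them. Second, because the density of NSNP elements is $\frac12-\theta_q>\frac12-\frac8{35}=\frac{19}{70}$, a quadruple should appear after a short scan heuristically. So I would scan $x=0,1,2,\dots$ with fast modular exponentiation, reusing the results for overlapping windows, and stop at the first success.

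This yields a witness $x$ for each candidate, which settles the proposition. Recording these witnesses in the appendix output makes the claim independently checkable.
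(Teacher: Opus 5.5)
Your proposal is correct and is essentially the paper's proof. The appendix program enumerates all odd prime powers $q\le 9156676$, keeps those with $p\ge5$, $\ell=3$ and $\theta_q<8/35$ (computed exactly over $\mathbb Q$; your remark that $3\mid q-1$ already forces $p\ne3$ is right), and searches directly for $x$ with $x,\dots,x+3$ all NSNP, using exactly your test ($a^{(q-1)/2}=-1$ and $a^{(q-1)/r}=1$ for some odd prime $r\mid q-1$, scanning all of $\Fq$ for proper prime powers), so your discrete-logarithm variant and density heuristic are optional refinements only.
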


\begin{proof}
This follows from the SageMath computation in
Appendix~\ref{app:code}.
\end{proof}

\begin{corollary}\label{cor:boundary-primes-quadruple}
If $\theta_q=8/35$, then the set of prime divisors of $q-1$ is
precisely $\{2,3,5,7\}$.  In particular, the least odd prime divisor
of $q-1$ is $3$.
\end{corollary}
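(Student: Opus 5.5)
The plan mirrors the proof of Corollary~\ref{cor:boundary-primes-triple}. First I will rewrite $\theta_q$ as an Euler product over the prime divisors of $q-1$:
\[
\theta_q=\frac{\varphi(q-1)}{q-1}=\prod_{p\mid q-1}\left(1-\frac1p\right).
\]
This is the standard product formula for Euler's totient function. Next I will observe that
\[
\frac{8}{35}=\left(1-\frac12\right)\left(1-\frac13\right)\left(1-\frac15\right)\left(1-\frac17\right),
\]
which is a direct arithmetic check: $\frac12\cdot\frac23\cdot\frac45\cdot\frac67=\frac{48}{210}=\frac{8}{35}$.

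Assuming $\theta_q=8/35$, the set $\mathcal P=\{p:p\mid q-1\}$ and the set $\{2,3,5,7\}$ then give the same value under the map $\mathcal P\mapsto\prod_{p\in\mathcal P}(1-1/p)$. By Lemma~\ref{lem:support-injectivity} this map is injective on finite sets of primes, so $\mathcal P=\{2,3,5,7\}$. Since $3$ is the smallest odd element of this set, the least odd prime divisor of $q-1$ is $\ell=3$.

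No step here is a real obstacle. All the substance is already in Lemma~\ref{lem:support-injectivity}, and the only thing to verify is the arithmetic identity for $8/35$. One point to keep in mind is that $q-1$ is even, so $2\in\mathcal P$ automatically. This is consistent with the conclusion but is not needed for the argument.
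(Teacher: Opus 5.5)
Your proposal is correct and follows the paper's proof: write $\theta_q$ as the Euler product over the primes dividing $q-1$, check that $8/35=\prod_{p\in\{2,3,5,7\}}(1-1/p)$, and apply Lemma~\ref{lem:support-injectivity}. Your arithmetic $48/210=8/35$ is right, and your remark that $2\in\mathcal P$ automatically is accurate but not needed, as you say.
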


\begin{proof}
Apply Lemma~\ref{lem:support-injectivity} to
\[
 \frac8{35}
 =\left(1-\frac12\right)
  \left(1-\frac13\right)
  \left(1-\frac15\right)
  \left(1-\frac17\right).
\]
\end{proof}

\begin{proposition}\label{prop:quadruple-boundary-check}
Among the odd prime powers satisfying
\[
 \theta_q=\frac8{35}
 \qquad\text{and}\qquad q\le9156676,
\]
there are exactly $173$ candidates.  Every corresponding field
contains four consecutive NSNP elements except $\F_{211}$, which
contains no NSNP 4-tuple.
\end{proposition}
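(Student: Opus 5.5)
This is a finite verification, and I would carry it out in the same way as Proposition~\ref{prop:boundary}. By Corollary~\ref{cor:boundary-primes-quadruple}, the condition $\theta_q=8/35$ holds exactly when the prime support of $q-1$ is $\{2,3,5,7\}$. So the candidates are the odd prime powers $q=1+2^a3^b5^c7^d$ with $a,b,c,d\ge1$ and $q\le 9156676$. The hypothesis $\operatorname{char}\Fq>3$ is automatic here: $q\equiv1\pmod 3$ rules out characteristic $3$, and $q$ is odd. I would not test every integer up to $9156676$. Instead I would generate the $5$-smooth-times-$7$ numbers $n=2^a3^b5^c7^d\le 9156675$ with all exponents positive, which is a short nested loop, and keep those for which $n+1$ is a prime power. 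This list should have exactly $173$ entries. As a cross-check, I would recompute $\varphi(q-1)/(q-1)$ exactly for each entry and confirm that it equals $8/35$.

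For each candidate $q$, I would build the set of NSNP elements. For prime $q$ I work with residues modulo $q$; for a proper prime power I use a model of $\Fq$ with an explicit enumeration of its elements. Since $q-1$ has prime divisors exactly $2,3,5,7$, an element $a\ne0$ is NSNP if and only if $a^{(q-1)/2}=-1$ and $a^{(q-1)/p}=1$ for some $p\in\{3,5,7\}$. Equivalently, it is easier to take a generator $g$ and mark $g^m$ as NSNP when $m$ is odd and $\gcd(m,q-1)>1$, tabulating a discrete-log table once. I would then scan $x\in\Fq$ for $x,x+1,x+2,x+3$ all NSNP and stop at the first hit. In the prime case, ``consecutive'' uses the prime-field translates $x+j$ with $j\in\F_p$, which is all that is needed.

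To make the search cheap, I would first try the structured witness from Section~\ref{sec:k-tuples}: look for four consecutive non-square cubes, since $\ell=3$. By Theorem~\ref{thm:k-tuple-estimate} these exist in abundance once $q$ is moderately large, so for most candidates the search ends almost immediately. Only when this fails would I fall back to the full NSNP test. For $q=211$, the answer is negative and must be exhaustive: with $210=2\cdot3\cdot5\cdot7$ there are $105-48=57$ NSNP elements, and I must check all $211$ starting points $x$ and confirm that none gives a run of four. This direct enumeration can also be done by hand-checkable listing, as a sanity check on the code.

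The main obstacle is not conceptual but computational reliability. I need the candidate enumeration to be provably complete, which the smooth-number generation guarantees. I also need to handle the non-prime fields correctly, for instance $q=p^2$ or $p^4$ with $q-1$ $\{2,3,5,7\}$-smooth, such as $q=2401=7^4$ excluded since $7\mid q$, or $q=11^2=121$ excluded since $120$ lacks $7$. So I expect to check explicitly which prime-power candidates occur. The primitive-element test there must use genuine field arithmetic rather than integer residues.
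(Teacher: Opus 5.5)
Your proposal is correct and is essentially the paper's proof. Both are a finite computer check that lists the candidates and then searches each field exhaustively for $x,x+1,x+2,x+3$ all NSNP, using the same power test ($a^{(q-1)/2}=-1$ and $a^{(q-1)/p}=1$ for some odd $p\mid q-1$). Your generation of $q-1=2^a3^b5^c7^d$ with positive exponents, justified by Corollary~\ref{cor:boundary-primes-quadruple}, is only a more efficient route to the same candidate list that the paper's code gets by computing $\theta_q$ for every odd prime power up to $9156676$.

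One small correction: Theorem~\ref{thm:k-tuple-estimate} with $k=4$, $L=6$ forces non-square cube $4$-tuples only once $\sqrt q$ exceeds about $3025$, i.e.\ only at the very top of this range. So your cube-first shortcut is a heuristic speed-up, not something the theorem guarantees here. This is harmless, since you fall back to the full NSNP test.
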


\begin{proof}
This follows from the SageMath computation in
Appendix~\ref{app:code}.
\end{proof}

\begin{proof}[Proof of Theorem~\ref{thm:main-quadruple}]
Suppose first that $\theta_q<8/35$.  If
\eqref{eq:quadruple-large-q-condition} holds,
Corollary~\ref{cor:pattern-existence} gives four consecutive
non-square $\ell$th powers.  Otherwise,
Lemma~\ref{lem:quadruple-reduction} gives $\ell=3$ and
$q\le9156676$, and Proposition~\ref{prop:quadruple-strict-check}
completes the proof.

Now let $\theta_q=8/35$.  By
Corollary~\ref{cor:boundary-primes-quadruple}, we have $\ell=3$.  If
\eqref{eq:quadruple-large-q-condition} holds,
Corollary~\ref{cor:pattern-existence} gives the required 4-tuple.
Otherwise $q\le9156676$, and
Proposition~\ref{prop:quadruple-boundary-check} gives the conclusion
except for $q=211$.

Finally,
\[
 \theta_{211}=\frac{\varphi(210)}{210}=\frac8{35},
\]
and Proposition~\ref{prop:quadruple-boundary-check} shows that
$\F_{211}$ has no NSNP 4-tuple.  Hence no constant larger than
$8/35$ can replace $8/35$ in the strict inequality.
\end{proof}

\appendix
\refstepcounter{section}
\section*{Appendix~\thesection. SageMath verification}\label{app:code}

The following program, implemented in SageMath
9.3~\cite{SageMath}, performs the finite verifications used in
Propositions~\ref{prop:finite-check}, \ref{prop:boundary},
\ref{prop:quadruple-strict-check}, and
\ref{prop:quadruple-boundary-check}.  It
\begin{enumerate}
\item enumerates all odd prime powers in the relevant finite ranges;
\item computes $\theta_q$ and the least odd prime divisor of $q-1$;
\item tests directly whether an NSNP triple or 4-tuple exists and
records a witness whenever one does.
\end{enumerate}
Running the program gives the following results.  All $1374$ triple
candidates with $\theta_q<4/15$ contain an NSNP triple, while among
the $56$ candidates with $\theta_q=4/15$ the only exceptions are
$q\in\{31,61,121\}$.  Similarly, all $13943$ 4-tuple candidates with
$\theta_q<8/35$ contain an NSNP 4-tuple, while among the $173$
candidates with $\theta_q=8/35$ the only exception is $q=211$.

% (lstinputlisting) verify_nsnp.sage
\begin{lstlisting}[style=sagestyle]
# verify_nsnp.sage
# SageMath 9.3+ compatible
#
# Exact finite verification for the triple and 4-tuple results.

from sage.all import *
from functools import lru_cache
import csv
import sys


T3 = QQ(4) / 15
T4 = QQ(8) / 35

# Analytic cutoffs from Corollary 2.3.  The finite checks use q <= B.
B3 = ZZ(106276)
B4 = ZZ(9156676)

SUPPORT3 = frozenset([ZZ(2), ZZ(3), ZZ(5)])
SUPPORT4 = frozenset([ZZ(2), ZZ(3), ZZ(5), ZZ(7)])

EXPECTED_STRICT3 = 1374
EXPECTED_BOUNDARY3 = 56
EXPECTED_STRICT4 = 13943
EXPECTED_STRICT4_PRIME = 13862
EXPECTED_STRICT4_EXTENSION = 81
EXPECTED_BOUNDARY4 = 173
EXPECTED_BOUNDARY4_PRIME = 168
EXPECTED_BOUNDARY4_EXTENSION = 5
EXPECTED_BAD3 = [ZZ(31), ZZ(61), ZZ(121)]
EXPECTED_BAD4 = [ZZ(211)]

OUTPUT_FILE = "verify_nsnp_tuples.csv"


@lru_cache(maxsize=None)
def q_minus_one_data(q):
    """Return factorization, prime support, theta_q, and least odd prime divisor."""
    q = ZZ(q)
    fac = factor(q - 1)
    support = frozenset(ZZ(r) for r, _ in fac)
    theta = prod(QQ(r - 1) / r for r in support)
    odd_primes = sorted(r for r in support if r != 2)
    ell = min(odd_primes) if odd_primes else None
    return fac, support, theta, ell


def odd_prime_powers_upto(bound):
    """Yield all (q,p,e) with q=p^e<=bound and p odd prime."""
    bound = ZZ(bound)
    for p in prime_range(3, bound + 1):
        p = ZZ(p)
        q = p
        e = 1
        while q <= bound:
            yield q, p, e
            q *= p
            e += 1


def generate_candidates():
    """Enumerate the four candidate groups in one pass."""
    strict3, boundary3 = [], []
    strict4, boundary4 = [], []

    print("Enumerating odd prime powers up to", B4)

    for q, p, e in odd_prime_powers_upto(B4):
        _, support, theta, ell = q_minus_one_data(q)
        item = (q, p, e)

        if q <= B3:
            if theta < T3 and ell == 3:
                strict3.append(item)
            if theta == T3:
                if support != SUPPORT3:
                    raise RuntimeError(
                        "Unexpected 4/15 support for q={}: {}".format(
                            q, support))
                boundary3.append(item)

        if p >= 5 and theta < T4 and ell == 3:
            strict4.append(item)
        if theta == T4:
            if support != SUPPORT4:
                raise RuntimeError(
                    "Unexpected 8/35 support for q={}: {}".format(
                        q, support))
            boundary4.append(item)

    groups = [strict3, boundary3, strict4, boundary4]
    for group in groups:
        group.sort(key=lambda item: item[0])
    return tuple(groups)


def is_nsnp_prime(a, p, odd_support):
    """Test NSNP membership in the prime field F_p."""
    a = ZZ(a) % p
    if a == 0:
        return False
    n = p - 1
    if power_mod(a, n // 2, p) != p - 1:
        return False
    return any(power_mod(a, n // r, p) == 1
               for r in odd_support)


def prime_field_witness(p, length):
    """Find a consecutive NSNP tuple in F_p, or return None."""
    _, support, _, _ = q_minus_one_data(p)
    odd_support = sorted(r for r in support if r != 2)
    for x in range(int(p)):
        if all(is_nsnp_prime(x + s, p, odd_support)
               for s in range(length)):
            return tuple(ZZ((x + s) % p) for s in range(length))
    return None


def extension_field_witness(q, length):
    """Find a consecutive NSNP tuple in a proper extension field."""
    F = GF(q, name="a")
    n = q - 1
    _, support, _, _ = q_minus_one_data(q)
    odd_support = sorted(r for r in support if r != 2)
    minus_one = -F.one()

    def is_nsnp(z):
        if z == 0 or z**(n // 2) != minus_one:
            return False
        return any(z**(n // r) == 1 for r in odd_support)

    for x in F:
        if all(is_nsnp(x + F(s)) for s in range(length)):
            return tuple(x + F(s) for s in range(length))
    return None


@lru_cache(maxsize=None)
def nsnp_witness(q, p, e, length):
    """Dispatch to the prime-field or extension-field search."""
    if e == 1:
        return prime_field_witness(p, length)
    return extension_field_witness(q, length)


def verify_group(candidates, label, length, progress_step=1000):
    """Verify one candidate group and return records and exceptions."""
    records = []
    exceptions = []
    total = len(candidates)
    print("\nChecking {} candidates: {}".format(label, total))

    for index, (q, p, e) in enumerate(candidates, start=1):
        fac, _, theta, ell = q_minus_one_data(q)
        witness = nsnp_witness(q, p, e, length)
        if witness is None:
            exceptions.append(q)
            print(" exception q={}".format(q))

        records.append({
            "group": label,
            "q": q,
            "p": p,
            "e": e,
            "factorization": fac,
            "theta": theta,
            "least_odd": ell,
            "length": length,
            "witness": witness,
        })

        if progress_step and (index % progress_step == 0 or index == total):
            print(" checked {}/{}".format(index, total))
            sys.stdout.flush()

    return records, exceptions


def run_data_for_211():
    """Return the NSNP count and all linear NSNP runs in F_211."""
    p = ZZ(211)
    _, support, _, _ = q_minus_one_data(p)
    odd_support = sorted(r for r in support if r != 2)
    membership = [is_nsnp_prime(x, p, odd_support)
                  for x in range(int(p))]
    runs = []
    x = 0
    while x < p:
        if not membership[x]:
            x += 1
            continue
        start = x
        while x < p and membership[x]:
            x += 1
        runs.append(tuple(range(start, x)))
    return sum(ZZ(v) for v in membership), runs


def write_results(filename, records):
    """Write all verification records to one CSV file."""
    fields = ["group", "q", "p", "e", "factorization_of_q_minus_1",
              "theta_q", "least_odd", "length", "witness"]
    with open(filename, "w", newline="") as stream:
        writer = csv.writer(stream)
        writer.writerow(fields)
        for item in records:
            writer.writerow([
                item["group"], item["q"], item["p"], item["e"],
                str(item["factorization"]), str(item["theta"]),
                item["least_odd"], item["length"],
                str(item["witness"]),
            ])


def main():
    strict3, boundary3, strict4, boundary4 = generate_candidates()

    assert len(strict3) == EXPECTED_STRICT3
    assert len(boundary3) == EXPECTED_BOUNDARY3
    assert len(strict4) == EXPECTED_STRICT4
    assert sum(e == 1 for _, _, e in strict4) == EXPECTED_STRICT4_PRIME
    assert sum(e > 1 for _, _, e in strict4) == EXPECTED_STRICT4_EXTENSION
    assert len(boundary4) == EXPECTED_BOUNDARY4
    assert sum(e == 1 for _, _, e in boundary4) == EXPECTED_BOUNDARY4_PRIME
    assert sum(e > 1 for _, _, e in boundary4) == EXPECTED_BOUNDARY4_EXTENSION

    rec_s3, bad_s3 = verify_group(strict3, "strict-4/15", 3)
    rec_b3, bad_b3 = verify_group(
        boundary3, "boundary-4/15", 3, progress_step=20)
    rec_s4, bad_s4 = verify_group(strict4, "strict-8/35", 4)
    rec_b4, bad_b4 = verify_group(
        boundary4, "boundary-8/35", 4, progress_step=20)

    assert bad_s3 == []
    assert bad_b3 == EXPECTED_BAD3
    assert bad_s4 == []
    assert bad_b4 == EXPECTED_BAD4

    count_211, runs_211 = run_data_for_211()
    assert count_211 == 57
    assert max(len(run) for run in runs_211) == 3
    assert [run for run in runs_211 if len(run) == 3] == [
        (26, 27, 28), (31, 32, 33), (88, 89, 90)
    ]

    records = rec_s3 + rec_b3 + rec_s4 + rec_b4
    write_results(OUTPUT_FILE, records)

    print("\n==============================================")
    print("COMBINED FINITE VERIFICATION SUMMARY")
    print("==============================================")
    print("strict triple candidates:", len(strict3))
    print("triple boundary candidates:", len(boundary3))
    print("triple boundary exceptions:", bad_b3)
    print("strict 4-tuple candidates:", len(strict4))
    print("  prime fields:", EXPECTED_STRICT4_PRIME)
    print("  proper prime powers:", EXPECTED_STRICT4_EXTENSION)
    print("4-tuple boundary candidates:", len(boundary4))
    print("4-tuple boundary exceptions:", bad_b4)
    print("NSNP elements in F_211:", count_211)
    print("maximum NSNP run in F_211:",
          max(len(run) for run in runs_211))
    print("certificate file:", OUTPUT_FILE)


main()
\end{lstlisting}

%\section*{Acknowledgment}
%The authors are grateful to the editor and the anonymous referees for taking time to read and comment on the article very %carefully and insightfully. Their nice comments and suggestions helped us to improve the article very much.

\section*{Data availability}
Data sharing not applicable to this article as no datasets were generated or analysed during the current study.

\section*{Declaration of competing interest}
The authors declare that we have no known competing financial interests or personal relationships 
that could be perceived to influence the work reported in this paper.

\end{document}